\newtheorem{theorem}{Theorem}
\newtheorem{remark}{Remark}
\newtheorem{lemma}{Lemma}
\journal{arXiv}
\begin{document}

\begin{frontmatter}



\title{Optimal Graph Laplacian}


\author{Kazuhiro Sato}
\ead{ksato@mail.kitami-it.ac.jp}
\address{School of Regional Innovation and Social Design Engineering,
 Kitami Institute of Technology,
 Hokkaido 090-8507, Japan}

\begin{abstract}
This paper provides a construction method of the nearest graph Laplacian to a matrix identified from measurement data of graph Laplacian dynamics that include biochemical systems, synchronization systems, and multi-agent systems.
We consider the case where
 the network structure, i.e., the connection relationship of edges of a given graph, is known.
A problem of finding the nearest graph Laplacian is formulated as a convex optimization problem.
Thus, our problem can be solved using interior point methods.
However, the complexity of each iteration by interior point methods is $O(n^6)$, where $n$ is the number of nodes of the network.
That is, if $n$ is large, interior point methods cannot solve our problem within a practical time.
To resolve this issue, we propose a simple and efficient algorithm with the calculation complexity $O(n^2)$.
Simulation experiments demonstrate that our method is useful to perform data-driven modeling of graph Laplacian dynamics.
\end{abstract}

\begin{keyword}
Graph Laplacian, convex optimization, data-driven modeling 

\end{keyword}

\end{frontmatter}


\section{Introduction}
Many networked systems can be described as graph Laplacian dynamics
\begin{align}
\dot{x}(t) = -Lx(t), \label{1}
\end{align}
where $x(t)\in {\bf R}^n$ and $L\in {\bf R}^{n\times n}$ denote the state and graph Laplacian, respectively; e.g., system \eqref{1} includes biochemical systems \cite{estrada2016information, gunawardena2012linear, karp2012complex, mirzaev2013laplacian}, synchronization systems \cite{ashwin2016mathematical, dorfler2013synchronization, dorfler2014synchronization}, and multi-agent systems \cite{lns-v.96, mesbahi2010graph, olfati2004consensus}.
The graph Laplacian $L$ is determined by the connection relationship of edges of a given graph (i.e., the network structure) and weights of the edges.
In contrast to the identification of the network structure, it is difficult to identify the weights due to the lack of sensor measurements and sensor noises.
That is, it is difficult to identify the graph Laplacian $L$ using
the existing identification methods such as subspace identification methods \cite{katayama2006subspace, qin2006overview} and dynamic mode decomposition methods \cite{dawson2016characterizing, kutz2016dynamic, schmid2010dynamic}.
In other words, the existing identification methods may provide a matrix $A\in {\bf R}^{n\times n}$ that dose not have the graph Laplacian property.
For this reason, this paper proposes a construction method of the nearest graph Laplacian $L$ to a given matrix $A$ under the assumption that the network structure is known.
This assumption is motivated from the fact that it is possible to identify the network structure for many examples \cite{azuma2017structural, liu2011controllability}.

We can find related works in \cite{anderson2017distance, gillis2017computing, higham2002computing, orbandexivry2013nearest, qi2006quadratically}.
The authors of \cite{gillis2017computing, orbandexivry2013nearest} studied the problem of enforcing the stability of a system a posteriori.
That is, \cite{gillis2017computing, orbandexivry2013nearest} proposed methods for finding the nearest stable matrix to  a given unstable matrix.
Reference  
 \cite{anderson2017distance} considered the problem of finding the nearest stable Metzler matrix to a given non-Metzler matrix.
The authors of \cite{higham2002computing, qi2006quadratically} proposed methods for finding the nearest symmetric positive semidefinite matrix with unit diagonal to
a given symmetric matrix. 
However, to the best of our knowledge, there has been no previous work regarding construction of optimal graph Laplacian.

The contributions of this paper are summarized as follows.
We show that the aforementioned optimal graph Laplacian construction problem can be formulated as a convex optimization problem with an entrywise 1-norm objective function in contrast to the previous works in \cite{anderson2017distance, gillis2017computing, orbandexivry2013nearest}. In fact, the problems in \cite{anderson2017distance, gillis2017computing, orbandexivry2013nearest} are non-convex optimization problems with an entrywise 2-norm objective function due to the stability constraint, while the constraint is not required for our problem.
Thus, in contrast to \cite{anderson2017distance, gillis2017computing, orbandexivry2013nearest},
 we can obtain a global optimal solution using interior point solvers such as CVX \cite{grant2013cvx}.
However, the complexity of each iteration by interior point methods is $O(n^6)$ \cite{lin2011linearized}.
That is, if $n$ is large, interior point methods cannot solve our convex optimization problem within a practical time.
In order to solve our problem with $n\geq 1000$ within a practical time, we develop a simple and effective algorithm with the calculation complexity $O(n^2)$.
Furthermore, we demonstrate that if we replace our objective function with an entrywise 2-norm function, we cannot develop such simple algorithm.
Simulation experiments illustrate 
 that our proposed method is useful to perform data-driven modeling of graph Laplacian dynamics \eqref{1}.

{\it Notation:} The set of real numbers is denoted by ${\bf R}$.
The symbols ${\bf 0}_n \in {\bf R}^n$ and ${\bf 1}_n \in {\bf R}^n$ are column vectors with all zero entries and all one entries, respectively.
For any real number $a$, $|a|$ denotes the absolute value of $a$.
Given a matrix $A\in {\bf R}^{n\times n}$, $||A||_1$ and $||A||_2$ denote the entrywise 1-norm and 2-norm, respectively. That is, 
\begin{align*}
||A||_1 := \sum_{i=1}^n\sum_{j=1}^n|a_{ij}|, \quad
 ||A||_2 := \sqrt{\sum_{i=1}^n\sum_{j=1}^na_{ij}^2}.
\end{align*}
\section{Problem formulation} \label{sec2}

This section formulates our problem.
To this end, let $\mathcal{G} = (V,E,W)$ be a weighted graph, where $V=\{1,2,\ldots,  n\}$ is the node set, $E\subset V\times V$ is the edge set,
and $W$ is the adjacency matrix consisting of non-negative elements $w_{ij}$ called the weights.
That is, for each edge $(i,j)\in E$, the $i$-th row and $j$-th column entry of $W$ equals the weight $w_{ij}>0$, and
all other entries of $W$ are equal to zero.
The degree matrix of $\mathcal{G}$ is a diagonal matrix denoted by $D={\rm diag}(d_1,d_2,\ldots, d_n)$, with $d_i=\sum_{j=1}^n w_{ij}$.
The graph Laplacian $L$ of $\mathcal{G}$ is defined as $L:=D-W$.
Thus, the graph Laplacian $L$ has the following properties:
\begin{itemize}
\item The sum of row elements is equal to zero; i.e.,
\begin{align}
L{\bf 1}_n = {\bf 0}_n. \label{2}
\end{align}
\item The diagonal elements of $L$ are non-negative and non-diagonal elements are non-positive; i.e.,
\begin{align}
L \in S_1. \label{3}
\end{align}
\end{itemize}
Here, the set $S_1$ is defined as
\begin{align*}
S_1:= \left\{ (a_{ij})\in {\bf R}^{n\times n} | a_{ii}\geq 0,\, a_{ij}\leq 0\, (i\neq j)\right\}.
\end{align*}
Conversely, we call a matrix $L\in {\bf R}^{n\times n}$ $(n\geq 2)$ graph Laplacian if $L$ satisfies \eqref{2} and \eqref{3} \cite{lns-v.96}.
It follows from \eqref{2} that the graph Laplacian $L$ has at least one zero eigenvalue.
Furthermore, the eigenvalues of the graph Laplacian $L$ different from zero have strictly-positive real parts \cite{lns-v.96}; i.e.,
$-L$ in \eqref{1} is a stable matrix.
We assume that the network structure, i.e., the edge set $E$, is known.
This assumption comes from the fact that it is possible to identify the network structure for many examples \cite{azuma2017structural, liu2011controllability}.

The optimal solution to the following problem provides the nearest graph Laplacian to a given matrix $A$ in the case where the graph $\mathcal{G}$ is a directed graph.

\begin{framed}
Problem 1: 
\begin{align*}
&\mathop{\rm minimize}_{L\in {\bf R}^{n\times n}}\,\,\, ||A-L||_1\\
& {\rm subject\, to}\,\,\, \eqref{2}, \eqref{3},\,\, {\rm and}\,\, L\in S_2.
\end{align*}
\end{framed}

\noindent
Here, the set $S_2$ is defined as
\begin{align*}
S_2 := \left\{ (a_{ij})\in {\bf R}^{n\times n}\,|\, a_{ij}=0\,\, {\rm if}\,\, (i,j)\not\in E\,\, {\rm and}\,\, i\neq j \right\},
\end{align*}
which indicates the connection relationship of the edges of the graph $\mathcal{G}$.

Problem 1 is a convex optimization problem.
This is because the set of all $L$ satisfying constraint \eqref{2} is a vector space, i.e., a convex set, and
the sets $S_1$ and $S_2$ are also convex, and $||A-L||_1$ is a convex function \cite{boyd2004convex}.
Thus, Problem 1 can be solved by interior point solvers such as CVX \cite{grant2013cvx}.
However, the complexity of each iteration by interior point methods is $O(n^6)$ \cite{lin2011linearized}.
That is, if $n$ is large, interior point methods cannot solve Problem 1 within a practical time, as shown in Section \ref{sec5}.

Although the objective functions of \cite{anderson2017distance, gillis2017computing, orbandexivry2013nearest}  are entrywise 2-norms, 
the function of Problem 1 is an entrywise 1-norm.
This is because if we replace $||A-L||_1$ with $||A-L||_2^2$,
an algorithm for solving the modified problem becomes more complicated than the case of Problem 1, as explained in Section \ref{sec4}.

\section{Main results} \label{sec3}

This section proves the following theorem.

\begin{theorem}
Algorithm \ref{algorithm1} provides a global optimal solution to Problem 1.
\end{theorem}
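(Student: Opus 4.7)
The plan is to exploit the separable structure of Problem~1 to reduce it to $n$ independent one-dimensional convex programs, derive a case-based characterization of their optimizers, and show that Algorithm~\ref{algorithm1} produces a feasible $L$ whose rows match the characterization.

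First I would observe that the row-sum constraint \eqref{2}, the sign pattern \eqref{3}, the sparsity pattern $L\in S_2$, and the entrywise $1$-norm objective all decouple across rows. Setting $w_{ij}:=-L_{ij}\ge 0$ for $j\in N_i:=\{j\ne i:(i,j)\in E\}$ and $L_{ii}=\sum_{j\in N_i}w_{ij}$, the contribution of row $i$ to the objective is
\[
f_i(w)=\Bigl|A_{ii}-\sum_{j\in N_i}w_{ij}\Bigr|+\sum_{j\in N_i}|A_{ij}+w_{ij}|+c_i,
\]
where $c_i=\sum_{j\notin N_i\cup\{i\}}|A_{ij}|$ is constant. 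Each row thus becomes an $|N_i|$-dimensional convex problem in the variables $w_{ij}\ge 0$, and Problem~1 is the sum of these subproblems.

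Next I would analyze each subproblem by comparing $A_{ii}$ with $d_i^\star:=\sum_{j\in N_i}\max(0,-A_{ij})$, the unconstrained minimizer of the off-diagonal terms. The crucial observation is that perturbing any $w_{ij}$ above (respectively below) $\max(0,-A_{ij})$ changes the $j$-th off-diagonal term at rate $+1$, while varying the row sum $\sum_j w_{ij}$ changes the diagonal term at rate $\pm 1$ depending on the sign of $A_{ii}-\sum_j w_{ij}$. A direct subdifferential argument then yields three regimes: if $A_{ii}\ge d_i^\star$, the optimum is any $w$ with $w_{ij}\ge\max(0,-A_{ij})$ and $\sum_j w_{ij}=A_{ii}$; if $0\le A_{ii}\le d_i^\star$, the optimum is any $w$ with $0\le w_{ij}\le\max(0,-A_{ij})$ and $\sum_j w_{ij}=A_{ii}$; and if $A_{ii}\le 0$, any $w$ satisfying $0\le w_{ij}\le\max(0,-A_{ij})$ is optimal. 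In each regime the optimal set is a polytope, which is exactly what gives Algorithm~\ref{algorithm1} the latitude to select a simple closed-form representative per row.

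Finally, I would verify row by row that the output of Algorithm~\ref{algorithm1} falls inside the corresponding optimal polytope and satisfies the feasibility requirements \eqref{2}, \eqref{3}, and $L\in S_2$; by the separability established in the first step, global optimality of the algorithm for Problem~1 then follows. The main technical obstacle is not convexity (which the paper already records) but the case analysis itself: one has to handle the boundary cases $A_{ii}=0$ and $A_{ii}=d_i^\star$ correctly, deal with the non-smoothness of $|\cdot|$ at the origin via subgradients, and in the middle regime confirm that the rule Algorithm~\ref{algorithm1} uses for redistributing the deficit $d_i^\star-A_{ii}$ among the neighbors $j\in N_i$ never pushes any $w_{ij}$ below zero. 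Once the per-row argument is in place, the $O(n^2)$ complexity claim is automatic, since each of the $n$ row subproblems is resolved in $O(|N_i|)=O(n)$ elementary operations.
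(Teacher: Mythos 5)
Your row-by-row decomposition is sound and is in fact a genuinely different (and in principle workable) route from the paper's, which instead observes that step 1 of Algorithm~\ref{algorithm1} solves the relaxation of Problem~1 without the row-sum constraint and then shows, via a triangle-inequality/contradiction argument, that the diagonal correction adds exactly the unavoidable extra cost $\sum_i|\alpha_i|$ with $\alpha_i=\sum_j\tilde L_{ij}$. However, your case analysis contains a concrete error that breaks your final verification step. Algorithm~\ref{algorithm1} does not ``redistribute the deficit $d_i^\star-A_{ii}$ among the neighbors'': it fixes $w_{ij}=\max(0,-A_{ij})$ for every $j\in N_i$ (the projection) and then overwrites only the diagonal, so its output always has $\sum_{j\in N_i}w_{ij}=d_i^\star$, i.e.\ $L_{ii}=d_i^\star$, regardless of $A_{ii}$. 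In your first two regimes you require $\sum_j w_{ij}=A_{ii}$ exactly, so whenever $A_{ii}\neq d_i^\star$ the algorithm's output lies outside the polytopes you describe, and the claim ``the output falls inside the corresponding optimal polytope'' is false as stated.

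The fix is that your equalities should be inequalities. Writing $s=\sum_{j\in N_i}w_{ij}$, a correct subdifferential (or direct triangle-inequality) analysis of $f_i$ gives: for $A_{ii}\ge d_i^\star$ the optimal set is $\{w:\,w_{ij}\ge\max(0,-A_{ij}),\ s\le A_{ii}\}$; for $0\le A_{ii}\le d_i^\star$ it is $\{w:\,0\le w_{ij}\le\max(0,-A_{ij}),\ s\ge A_{ii}\}$; for $A_{ii}\le 0$ it is $\{w:\,0\le w_{ij}\le\max(0,-A_{ij})\}$ as you state. The point $w_{ij}=\max(0,-A_{ij})$ with $s=d_i^\star$ lies in all three sets, which is precisely why the algorithm works. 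A two-node example already exposes the gap as written: $A_{11}=5$, $A_{12}=-1$ gives $d_1^\star=1$; the algorithm outputs $L_{11}=1$, $L_{12}=-1$ with row cost $|5-1|+0=4$, equal to the cost $0+|{-1}+5|=4$ of your point $w_{12}=5$, yet $s=1\neq A_{11}=5$. With the corrected characterization (and the boundary/subgradient care you already flag) your separable argument goes through and is a legitimate alternative to the paper's relaxation-plus-lower-bound proof.
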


\noindent
Here, $\Pi_{S_1\cap S_2}: {\bf R}^{n\times n}\rightarrow S_1\cap S_2$ in Algorithm \ref{algorithm1} denotes the projection onto the closed convex set $S_1\cap S_2$.
That is,
\begin{align*}
\left( \Pi_{S_1\cap S_2}(X)\right)_{ii} &= 
\begin{cases}
X_{ii}\quad {\rm if}\quad X_{ii} \geq 0, \\
0\quad\,\,\, {\rm if}\quad X_{ii}<0,
\end{cases}\\
\left( \Pi_{S_1\cap S_2}(X)\right)_{ij} &= 
\begin{cases}
X_{ij}\quad {\rm if}\quad X_{ij}\leq 0\quad {\rm and}\quad (i,j)\in E, \\
0\quad\,\,\,\, {\rm if}\quad X_{ij}> 0\quad {\rm or}\quad (i,j)\not\in E,
\end{cases}
\end{align*}
where $i\neq j$.
Note that Theorem 1 holds for any weighted graph $\mathcal{G}$ and any matrix $A$.

\begin{algorithm}                      
\caption{Proposed method for solving Problem 1.}         
\label{algorithm1}                          
\begin{algorithmic}[1]
\STATE $L\leftarrow \Pi_{S_1\cap S_2} (A)$.
\FOR{$i=1,2,\ldots, n$ }
\STATE $L_{ii} \leftarrow -\sum_{j\neq i} L_{ij}$.
\ENDFOR
\end{algorithmic}
\end{algorithm}

{\it Proof of Theorem 1}: Step 1 in Algorithm \ref{algorithm1} provides the global optimal solution $\tilde{L}$ to the relaxed problem of Problem 1
\begin{align*}
&\mathop{\rm minimize}_{L\in {\bf R}^{n\times n}}\,\,\, ||A-L||_1, \\
& {\rm subject\, to}\,\,\, \eqref{3}\,\, {\rm and}\,\, L\in S_2.
\end{align*}
Algorithm 1 produces $L$ with 
\begin{align}
||\tilde{L}-L||_1 = \sum_{i=1}^n |\alpha_i|, \label{key-1}
\end{align}
where $\alpha_i:= \sum_{j=1}^n \tilde{L}_{ij}$.

Because the matrix $L$ satisfies the constraint conditions of Problem 1, we prove that the matrix $L$ minimizes $||A-L||_1$.
If $\alpha_1=\alpha_2=\cdots =\alpha_n=0$, we have $L=\tilde{L}$. That is, in this case, $\tilde{L}$ is the global optimal solution to Problem 1. 
Suppose that there exists $i$ such that $\alpha_i\neq 0$, i.e., $L\neq \tilde{L}$, and $L$ is not a global optimal solution to Problem 1.
That is, a global optimal solution $L^*$ to Problem 1 satisfies
\begin{align}
||\tilde{L}-L^*||_1 < \sum_{i=1}^n |\alpha_i|. \label{key0}
\end{align}
Then, there exists $i$ such that 
\begin{align}
\sum_{j=1}^n |d_{ij}| < |\alpha_i|\quad {\rm and}\quad |\alpha_i|>0, \label{key}
\end{align}
where $d_{ij} := L^*_{ij}-\tilde{L}_{ij}$.
This is because if $\sum_{j=1}^n |d_{ij}| \geq |\alpha_i|$ for any $i$, \eqref{key0} does not hold. 
From the definitions of $\alpha_i$ and $d_{ij}$, we obtain that
\begin{align}
\sum_{j=1}^n L^*_{ij} = \alpha_i + \sum_{j=1}^n d_{ij}. \label{key2}
\end{align}
If $\alpha_i>0$, \eqref{key} and \eqref{key2} imply that
\begin{align*}
\sum_{j=1}^n L^*_{ij} \geq |\alpha_i| - \sum_{j=1}^n |d_{ij}| >0.
\end{align*}
If $\alpha_i<0$, \eqref{key} and \eqref{key2} also imply that
\begin{align*}
\sum_{j=1}^n L^*_{ij} < \alpha_i + |\alpha_i| =0.
\end{align*}
Thus, if \eqref{key0} holds, $L^*{\bf 1}_n\neq {\bf 0}_n$.
This is a contradiction that $L^*$ is a solution to Problem 1.
Hence, $L$ satisfying \eqref{key-1} and the constraint conditions of Problem 1 is a global optimal solution to Problem 1. 
This completes the proof.

The calculation complexity of Algorithm \ref{algorithm1} is $O(n^2)$.
Thus, Algorithm \ref{algorithm1} is considerably more efficient than interior point methods.
In Section \ref{sec5}, we demonstrate this fact.

\begin{remark}
Problem 1 has infinitely many global optimal solutions.
In fact, from the proof of Theorem 1, all matrices $L$ satisfying 
\begin{align*}
||\Pi_{S_1\cap S_2}(A) - L||_1 = \sum_{i=1}^n \left|\sum_{j=1}^n (\Pi_{S_1\cap S_2}(A) )_{ij} \right|
\end{align*}
are global optimal solutions to Problem 1.
\end{remark}

\section{Comments on another objective function} \label{sec4}

This section explains that if we replace the objective function $||A-L||_1$ of Problem 1 with $||A-L||^2_2$, then we cannot obtain a simple algorithm such as Algorithm \ref{algorithm1}.
To this end, we consider the case where the graph $\mathcal{G}$ is a complete graph and $A\in S_1\cap S_2$. That is, we demonstrate it using the simplest case.

We consider the following optimization problem.
\begin{framed}
Problem 2: 
\begin{align*}
&\mathop{\rm minimize}_{L_{i1},L_{i2},\ldots, L_{in}\in {\bf R}}\,\,\,  \sum_{j=1}^n (L_{ij}-A_{ij})^2, \\
& {\rm subject\, to}\,\,\, \sum_{j=1}^n L_{ij}=0,\,\, L_{ii}\geq 0,\,\, L_{ij}\leq 0 \,\,(i\neq j).  
\end{align*}
\end{framed}

\noindent
This is because under the assumption that $\mathcal{G}$ is a complete graph, the matrix $L$ constructed by using the solutions $(L_{i1},L_{i2},\ldots, L_{in})$\,\,$(i=1,2,\ldots,n)$ to Problem 2 is a global optimal solution to 
\begin{align*}
&\mathop{\rm minimize}_{L\in {\bf R}^{n\times n}}\,\,\,  ||A-L||_2^2, \\
& {\rm subject\, to}\,\,\, \eqref{2}, \eqref{3},\,\, {\rm and}\,\, L\in S_2.  
\end{align*}

\noindent
To explain that an algorithm for solving Problem 2 becomes more complicated than Algorithm \ref{algorithm1}, we need the following lemma.

\begin{lemma}
The unique global optimal solution to
\begin{align}
&\mathop{\rm minimize}_{x_1,x_2,\ldots, x_n\in {\bf R}}\,\,\, f(x_1,x_2,\ldots,x_n):= \sum_{i=1}^n (x_i-a_i)^2, \label{original}\\
& {\rm subject\, to}\,\,\, \sum_{i=1}^n x_i=0 \nonumber
\end{align}
is
\begin{align}
x_j = a_j-\frac{\sum_{k=1}^n a_k}{n} \quad (j=1,2,\ldots, n), \label{key5}
\end{align}
where $a_1,a_2,\ldots, a_n\in {\bf R}$.
\end{lemma}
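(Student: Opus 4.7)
The plan is to solve this constrained quadratic minimization by Lagrange multipliers, and then argue uniqueness from strict convexity.

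First I would form the Lagrangian
\begin{align*}
\mathcal{L}(x_1,\ldots,x_n,\lambda) = \sum_{i=1}^n (x_i-a_i)^2 + \lambda \sum_{i=1}^n x_i,
\end{align*}
and set the partial derivatives $\partial \mathcal{L}/\partial x_i = 2(x_i - a_i) + \lambda$ equal to zero. This gives the candidate form $x_i = a_i - \lambda/2$ for every $i$. Substituting into the constraint $\sum_i x_i = 0$ yields $\lambda/2 = (\sum_k a_k)/n$, and plugging back produces exactly the formula \eqref{key5}.

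Next I would justify that this stationary point is actually the unique global minimizer. The objective $f$ is strictly convex on ${\bf R}^n$ (its Hessian is $2I_n$, which is positive definite), and the feasible set $\{x\in{\bf R}^n \mid \sum_i x_i = 0\}$ is a nonempty affine subspace, hence closed and convex. A strictly convex function restricted to a nonempty closed convex set attains its minimum at most at one point, so any KKT point is the unique global optimum. Since the linear constraint qualification is trivially satisfied, the Lagrange condition above is both necessary and sufficient, and the candidate found is the unique solution.

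The argument is essentially routine, and I do not expect a serious obstacle; the only point that deserves care is explicitly invoking strict convexity (rather than just convexity) to conclude uniqueness, since the lemma statement emphasizes ``unique'' global optimal solution. An equivalent, perhaps cleaner alternative would be to parameterize the constraint surface by eliminating one variable, say $x_n = -\sum_{i<n} x_i$, and minimize the resulting unconstrained strictly convex quadratic in $(x_1,\ldots,x_{n-1})$; this bypasses Lagrange multipliers entirely and makes uniqueness manifest, but the Lagrangian route is shorter to write down.
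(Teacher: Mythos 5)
Your proof is correct, but it takes a different route from the paper. The paper eliminates one variable via the constraint (substituting $x_1 = -\sum_{i=2}^n x_i$), minimizes the resulting unconstrained function $g(x_2,\ldots,x_n)$, and solves the stationarity system by explicitly writing down and inverting the $(n-1)\times(n-1)$ matrix $M = I + {\bf 1}{\bf 1}^T$; it then invokes convexity for global optimality and uniqueness of the KKT point. Your Lagrange-multiplier argument reaches the same formula without any matrix inversion: stationarity forces $x_i = a_i - \lambda/2$ for all $i$, and the constraint pins down $\lambda$ immediately. This is shorter and arguably cleaner, and your uniqueness argument via strict convexity of $f$ (Hessian $2I_n \succ 0$) restricted to the affine feasible set is more transparent than the paper's bare assertion that the KKT point is unique. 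Interestingly, the ``cleaner alternative'' you mention at the end --- parameterizing the constraint by eliminating a variable --- is essentially what the paper actually does, though the paper pays for it with the explicit computation of $M^{-1}$. The only cosmetic point worth adding to your write-up is a one-line remark that a minimizer exists (the objective is coercive and the feasible set is nonempty and closed), so that ``at most one minimizer'' upgrades to ``exactly one''; your explicit candidate already witnesses feasibility, so this is immediate.
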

\begin{proof}
The above optimization problem can be reduced to
\begin{align*}
\mathop{\rm minimize}_{x_2, x_3,\ldots, x_n\in {\bf R}}\,\,\, g(x_2, x_3,\ldots,x_n), 
\end{align*}
where 
\begin{align*}
g(x_2,x_3,\ldots,x_n) := \left( \sum_{i=2}^n x_i +a_1 \right)^2 + \sum_{i=2}^n (x_i-a_i)^2.
\end{align*}
Thus, we obtain that
\begin{align*}
\begin{pmatrix}
\frac{\partial g}{\partial x_2} \\
\frac{\partial g}{\partial x_3} \\
\vdots \\
\frac{\partial g}{\partial x_n} 
\end{pmatrix} = 2
\underbrace{
\begin{pmatrix}
2 & 1 & \cdots & 1 \\
1 & 2 & \cdots & 1 \\
\vdots & \vdots &  & \vdots \\
1 & 1 & \cdots & 2
\end{pmatrix}}_{M}
\begin{pmatrix}
x_2 \\
x_3 \\
\vdots \\
x_n
\end{pmatrix}
+2
\begin{pmatrix}
a_1 - a_2 \\
a_1 - a_3 \\
\vdots \\
a_1 - a_n
\end{pmatrix}.
\end{align*}
Because
\begin{align*}
M^{-1} = \frac{1}{n}
\begin{pmatrix}
n-1 & -1 & \cdots & -1 \\
-1 & n-1 & \cdots & -1 \\
\vdots & \vdots &  & \vdots \\
-1 & -1 & \cdots & n-1
\end{pmatrix},
\end{align*}
$\frac{\partial g}{\partial x_2} = \frac{\partial g}{\partial x_3} = \cdots = \frac{\partial g}{\partial x_n} = 0$ and $\sum_{i=1}^n x_i=0$ imply \eqref{key5}.
Because original problem \eqref{original} is a convex optimization problem, \eqref{key5} is a global optimal solution to \eqref{original}.
Furthermore, a point satisfying the Karush-Kuhn-Tucker (KKT) condition \cite{boyd2004convex} is unique.
Thus, \eqref{key5} is the unique global optimal solution to \eqref{original}.
\end{proof}

From Lemma 1, if $\sum_{j=1}^n A_{ij}>0$, then the unique global optimal solution to Problem 2 is given by
\begin{align}
L_{ij} = A_{ij} - \frac{\sum_{k=1}^n A_{ik}}{n}\quad (j=1,2,\ldots,n). \label{key6}
\end{align}
This is because \eqref{key6} implies that $L_{ii}\geq 0$ and $L_{ij}\leq 0$\,\,$(i\neq j)$.
However, if $\sum_{j=1}^n A_{ij}<0$, then \eqref{key6} does not guarantee $L_{ij}\leq 0$\,\,$(i\neq j)$.
That is, $L$ defined by \eqref{key6} is not a global optimal solution to the modified problem of Problem 1 in general.
For this reason, we cannot develop a simple algorithm such as Algorithm \ref{algorithm1} for solving the modified problem, even if
the graph $\mathcal{G}$ is a complete graph and $A\in S_1\cap S_2$.

\section{Numerical experiments} \label{sec5}

This section numerically compares Algorithm \ref{algorithm1} and CVX \cite{grant2013cvx} which is a popular solver for solving convex optimization problems.
Furthermore, we discuss eigenvalues of the graph Laplacian $L$ produced by Algorithm 1.
All computations were carried out using MATLAB R2017b on an Intel(R) Xeon(R) CPU E5-2637 v4 @ 3.50 GHz 3.50 GHz and 128 GB RAM.

We generated the matrix $A$ in Problem 1 by the following steps.
\begin{enumerate}
\item Generate the graph using the Watts and Strogats model \cite{watts1998collective}. Here, the number of the nodes is $n$.
\item Replace all nonzero elements of the adjacency matrix of the graph with $10\times {\rm rand}$, where rand is a single uniformly distributed random number in the interval $(0,1)$.
Set the modified adjacency matrix as $X$.
\item  For $i,j=1,2,\ldots, n$, 
\begin{align*}
Y_{ii} &:= \sum_{k=1}^n X_{ik}, \\
Y_{ij} &:= 0\quad (i\neq j).
\end{align*}
\item $L^*:= Y-X$.
\item $A:= L^* + s\times {\rm randn}(n)$, where $s>0$, and ${\rm randn}(n)$ denotes an $n\times n$ matrix of normally distributed random numbers.
\end{enumerate}
We here note that the Watts and Strogats model has three parameters $(n,K,\beta)$, where $K$ and $\beta$ denote the mean degree and rewiring probability, respectively.

\subsection{Comparison of Algorithm \ref{algorithm1} and CVX}

Table \ref{table1} shows the computational times (seconds) for different $n$ when $s=5$, $K=10$, and $\beta=0.3$.
When $n=1000$, $5000$, and $10000$, CVX could not solve Problem 1 due to the out of memory.
According to Table \ref{table1}, Algorithm \ref{algorithm1} is considerably faster than CVX.
In particular, Algorithm 1 could solve Problem 1 with $n=10000$ within a practical time.

\begin{table*}[t]
\caption{Comparison of Algorithm \ref{algorithm1} and CVX.} \label{table1}
  \begin{center}
    \begin{tabular}{|c|c|c|c|c|c|c|} \hline
          $n$  & 100 &  200 & 300 & 1000 & 5000 & 10000 \\ \hline 
CVX   & $5.600\times 10^{-1}$ &   $3.000\times 10^0$  & $1.004\times 10$ & out of memory & out of memory & out of memory \\ 
Algorithm \ref{algorithm1}  & $4.290\times 10^{-4}$ &  $1.340\times 10^{-3}$ & $2.496\times 10^{-3}$ & $3.267\times 10^{-2}$ & $9.464\times 10^{-1}$ & $3.926\times 10^0$  \\ \hline
    \end{tabular}
  \end{center}
\end{table*}

\subsection{Eigenvalues of $L$ generated by Algorithm \ref{algorithm1}} \label{sec5.2}

Fig.\,\ref{Fig1} illustrates eigenvalues of the matrices $L^*$, $A$, and $L$ when $n=300$, $s=5$, $K=10$, and $\beta=0.3$.
The eigenvalues of $L^*$ and $L$ were more similar than those of $L^*$ and $A$. 
That is, we could construct the graph Laplacian $L$ near the graph Laplacian $L^*$ from the matrix $A$ in the sense of the eigenvalues.
This is a preferable result if the matrix $A$ can be regarded as a perturbed matrix of the graph Laplacian $L^*$.
If this is the case, it is important that the second smallest real parts of eigenvalues of $L^*$ and $L$ are near.
This is because those determine the consensus speed of multi-agent system \eqref{1} \cite{kocarev2013consensus}.
According to Fig.\,\ref{Fig1}, those of $L^*$ and $L$ are close, although those of $L^*$ and $A$ are too different.

\begin{figure}[t]
\begin{center}
\includegraphics[width = 10cm, height = 8cm]{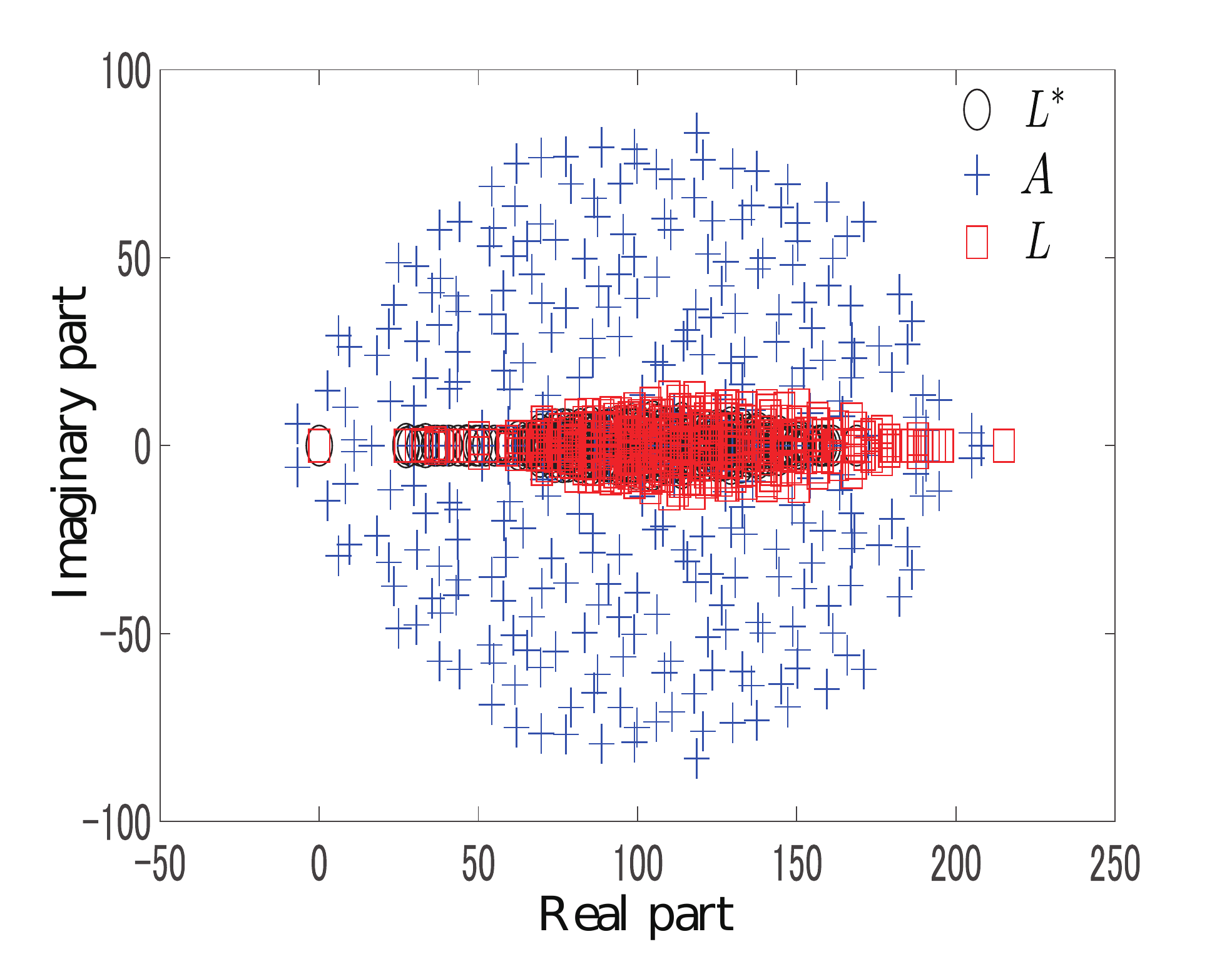}
\end{center}
\vspace{-8mm}
\caption{Eigenvalues of $L^*$, $A$, and $L$.} \label{Fig1}
\end{figure}

Moreover, Table \ref{table2} shows relations among the parameter $s$, Ave, and Var when $n=300$, $K=10$, and $\beta=0.3$,
where
\begin{align*}
{\rm Ave} &= \sum_{i=1}^{1000} \frac{|{\rm Re} (\lambda_{2,i}^*) -{\rm Re} (\lambda_{2,i})|}{1000}, \\
{\rm Var} &= \sum_{i=1}^{1000} \frac{(|{\rm Re} (\lambda_{2,i}^*) -{\rm Re} (\lambda_{2,i})| - {\rm Ave})^2}{1000}.
\end{align*}
Here, ${\rm Re} (\lambda_{2,i})$ and ${\rm Re} (\lambda_{2,i}^*)$ denote the second smallest real parts of  eigenvalues of $L$ and $L^*$, respectively, at the $i$-th trial.
Ave and Var both monotonically increase as $s$ increases.
According to Table \ref{table2}, if $0<s\leq 5$, we can expect that the second smallest real parts of eigenvalues of $L^*$ and $L$ are sufficiently close.
That is, even if each component of $A$ is relatively different from the corresponding component of $L^*$, Algorithm \ref{algorithm1} can generate $L$ close to $L^*$ in the sense of the second smallest real parts of eigenvalues of $L^*$ and $L$.

\begin{table*}[t]
\caption{Relations among $s$, Ave, and Var.} \label{table2}
  \begin{center}
    \begin{tabular}{|c|c|c|c|c|c|c|} \hline
          $s$  & 0.5 & 1 & 2&  3 & 4& 5  \\ \hline 
Ave  &0.1246  & 0.1938 & 0.4588 &   0.7743 & 1.2236 & 1.8365  \\ 
Var  &0.0107 & 0.0227 & 0.1283 &  0.3029 & 0.6844 & 1.1404  \\ \hline
    \end{tabular}
  \end{center}
\end{table*}


\subsection{Comments on an application to data-driven modeling}

The above results conclude that Algorithm \ref{algorithm1} is useful to perform data-driven modeling of graph Laplacian dynamics \eqref{1}.
In fact,
 to perform the data-driven modeling, it is desirable that
\begin{itemize}
\item we can construct the graph Laplacian $L$ from the matrix $A$ within a very short time.
\item the true graph Laplacian $L^*$ and the constructed graph Laplacian $L$  are sufficiently close. 
\end{itemize}
Table \ref{table1} indicates that Algorithm \ref{algorithm1} can produce the graph Laplacian within a very short time in contrast to CVX even if $n\approx 1000$.
Furthermore, according to Section \ref{sec5.2}, $L^*$ and $L$ are sufficiently close in the sense of 
the second smallest real parts of the eigenvalues, even if the matrix $A$ is relatively far from $L^*$.

\section{Conclusion} \label{sec6}

We have provided a simple and efficient algorithm with the calculation complexity $O(n^2)$ for solving a convex optimization problem of constructing the nearest graph Laplacian to a given matrix.
Simulation results have demonstrated that our proposed method is useful to perform data-driven modeling of graph Laplacian dynamics.

\section*{Acknowledgment}

This work was supported by JSPS KAKENHI Grant Number JP18K13773.





\bibliographystyle{model1-num-names}

\begin{thebibliography}{27}
\expandafter\ifx\csname natexlab\endcsname\relax\def\natexlab#1{#1}\fi
\providecommand{\bibinfo}[2]{#2}
\ifx\xfnm\relax \def\xfnm[#1]{\unskip,\space#1}\fi
\bibitem[{Estrada et~al.(2016)Estrada, Wong, DePace, and
  Gunawardena}]{estrada2016information}
\bibinfo{author}{J.~Estrada}, \bibinfo{author}{F.~Wong},
  \bibinfo{author}{A.~DePace}, \bibinfo{author}{J.~Gunawardena},
\newblock \bibinfo{title}{Information integration and energy expenditure in
  gene regulation},
\newblock \bibinfo{journal}{Cell} \bibinfo{volume}{166} (\bibinfo{year}{2016})
  \bibinfo{pages}{234--244}.
\bibitem[{Gunawardena(2012)}]{gunawardena2012linear}
\bibinfo{author}{J.~Gunawardena},
\newblock \bibinfo{title}{A linear framework for time-scale separation in
  nonlinear biochemical systems},
\newblock \bibinfo{journal}{PloS one} \bibinfo{volume}{7}
  (\bibinfo{year}{2012}).
\bibitem[{Karp et~al.(2012)Karp, Mill{\'a}n, Dasgupta, Dickenstein, and
  Gunawardena}]{karp2012complex}
\bibinfo{author}{R.~L. Karp}, \bibinfo{author}{M.~P. Mill{\'a}n},
  \bibinfo{author}{T.~Dasgupta}, \bibinfo{author}{A.~Dickenstein},
  \bibinfo{author}{J.~Gunawardena},
\newblock \bibinfo{title}{Complex-linear invariants of biochemical networks},
\newblock \bibinfo{journal}{Journal of theoretical biology}
  \bibinfo{volume}{311} (\bibinfo{year}{2012}) \bibinfo{pages}{130--138}.
\bibitem[{Mirzaev and Gunawardena(2013)}]{mirzaev2013laplacian}
\bibinfo{author}{I.~Mirzaev}, \bibinfo{author}{J.~Gunawardena},
\newblock \bibinfo{title}{Laplacian dynamics on general graphs},
\newblock \bibinfo{journal}{Bulletin of mathematical biology}
  \bibinfo{volume}{75} (\bibinfo{year}{2013}) \bibinfo{pages}{2118--2149}.
\bibitem[{Ashwin et~al.(2016)Ashwin, Coombes, and
  Nicks}]{ashwin2016mathematical}
\bibinfo{author}{P.~Ashwin}, \bibinfo{author}{S.~Coombes},
  \bibinfo{author}{R.~Nicks},
\newblock \bibinfo{title}{Mathematical frameworks for oscillatory network
  dynamics in neuroscience},
\newblock \bibinfo{journal}{The Journal of Mathematical Neuroscience}
  \bibinfo{volume}{6} (\bibinfo{year}{2016}) \bibinfo{pages}{2}.
\bibitem[{D{\"o}rfler et~al.(2013)D{\"o}rfler, Chertkov, and
  Bullo}]{dorfler2013synchronization}
\bibinfo{author}{F.~D{\"o}rfler}, \bibinfo{author}{M.~Chertkov},
  \bibinfo{author}{F.~Bullo},
\newblock \bibinfo{title}{Synchronization in complex oscillator networks and
  smart grids},
\newblock \bibinfo{journal}{Proceedings of the National Academy of Sciences}
  \bibinfo{volume}{110} (\bibinfo{year}{2013}) \bibinfo{pages}{2005--2010}.
\bibitem[{D{\"o}rfler and Bullo(2014)}]{dorfler2014synchronization}
\bibinfo{author}{F.~D{\"o}rfler}, \bibinfo{author}{F.~Bullo},
\newblock \bibinfo{title}{Synchronization in complex networks of phase
  oscillators: A survey},
\newblock \bibinfo{journal}{Automatica} \bibinfo{volume}{50}
  (\bibinfo{year}{2014}) \bibinfo{pages}{1539--1564}.
\bibitem[{Bullo(2018)}]{lns-v.96}
\bibinfo{author}{F.~Bullo}, \bibinfo{title}{Lectures on Network Systems},
  \bibinfo{publisher}{Version 0.96}, \bibinfo{year}{2018}.
\bibitem[{Mesbahi and Egerstedt(2010)}]{mesbahi2010graph}
\bibinfo{author}{M.~Mesbahi}, \bibinfo{author}{M.~Egerstedt},
  \bibinfo{title}{Graph theoretic methods in multiagent networks},
  \bibinfo{publisher}{Princeton University Press}, \bibinfo{year}{2010}.
\bibitem[{Olfati-Saber and Murray(2004)}]{olfati2004consensus}
\bibinfo{author}{R.~Olfati-Saber}, \bibinfo{author}{R.~M. Murray},
\newblock \bibinfo{title}{Consensus problems in networks of agents with
  switching topology and time-delays},
\newblock \bibinfo{journal}{IEEE Transactions on automatic control}
  \bibinfo{volume}{49} (\bibinfo{year}{2004}) \bibinfo{pages}{1520--1533}.
\bibitem[{Katayama(2006)}]{katayama2006subspace}
\bibinfo{author}{T.~Katayama}, \bibinfo{title}{Subspace methods for system
  identification}, \bibinfo{publisher}{Springer Science \& Business Media},
  \bibinfo{year}{2006}.
\bibitem[{Qin(2006)}]{qin2006overview}
\bibinfo{author}{S.~J. Qin},
\newblock \bibinfo{title}{An overview of subspace identification},
\newblock \bibinfo{journal}{Computers \& chemical engineering}
  \bibinfo{volume}{30} (\bibinfo{year}{2006}) \bibinfo{pages}{1502--1513}.
\bibitem[{Dawson et~al.(2016)Dawson, Hemati, Williams, and
  Rowley}]{dawson2016characterizing}
\bibinfo{author}{S.~T. Dawson}, \bibinfo{author}{M.~S. Hemati},
  \bibinfo{author}{M.~O. Williams}, \bibinfo{author}{C.~W. Rowley},
\newblock \bibinfo{title}{Characterizing and correcting for the effect of
  sensor noise in the dynamic mode decomposition},
\newblock \bibinfo{journal}{Experiments in Fluids} \bibinfo{volume}{57}
  (\bibinfo{year}{2016}) \bibinfo{pages}{42}.
\bibitem[{Kutz et~al.(2016)Kutz, Brunton, Brunton, and
  Proctor}]{kutz2016dynamic}
\bibinfo{author}{J.~N. Kutz}, \bibinfo{author}{S.~L. Brunton},
  \bibinfo{author}{B.~W. Brunton}, \bibinfo{author}{J.~L. Proctor},
  \bibinfo{title}{Dynamic mode decomposition: data-driven modeling of complex
  systems}, \bibinfo{publisher}{SIAM}, \bibinfo{year}{2016}.
\bibitem[{Schmid(2010)}]{schmid2010dynamic}
\bibinfo{author}{P.~J. Schmid},
\newblock \bibinfo{title}{Dynamic mode decomposition of numerical and
  experimental data},
\newblock \bibinfo{journal}{Journal of fluid mechanics} \bibinfo{volume}{656}
  (\bibinfo{year}{2010}) \bibinfo{pages}{5--28}.
\bibitem[{Azuma et~al.(2017)Azuma, Yoshida, and Sugie}]{azuma2017structural}
\bibinfo{author}{S.~Azuma}, \bibinfo{author}{T.~Yoshida},
  \bibinfo{author}{T.~Sugie},
\newblock \bibinfo{title}{{Structural monostability of activation-inhibition
  Boolean networks}},
\newblock \bibinfo{journal}{IEEE Transactions on Control of Network Systems}
  \bibinfo{volume}{4} (\bibinfo{year}{2017}) \bibinfo{pages}{179--190}.
\bibitem[{Liu et~al.(2011)Liu, Slotine, and
  Barab{\'a}si}]{liu2011controllability}
\bibinfo{author}{Y.~Y. Liu}, \bibinfo{author}{J.~J. Slotine},
  \bibinfo{author}{A.~Barab{\'a}si},
\newblock \bibinfo{title}{Controllability of complex networks},
\newblock \bibinfo{journal}{Nature} \bibinfo{volume}{473}
  (\bibinfo{year}{2011}) \bibinfo{pages}{167--173}.
\bibitem[{Anderson(2017)}]{anderson2017distance}
\bibinfo{author}{J.~Anderson},
\newblock \bibinfo{title}{{Distance to the Nearest Stable Metzler Matrix}},
\newblock in: \bibinfo{booktitle}{2017 IEEE 56th Conference on Decision and
  Control}, pp. \bibinfo{pages}{6567--6572}.
\bibitem[{Gillis and Sharma(2017)}]{gillis2017computing}
\bibinfo{author}{N.~Gillis}, \bibinfo{author}{P.~Sharma},
\newblock \bibinfo{title}{{On computing the distance to stability for matrices
  using linear dissipative Hamiltonian systems}},
\newblock \bibinfo{journal}{Automatica} \bibinfo{volume}{85}
  (\bibinfo{year}{2017}) \bibinfo{pages}{113--121}.
\bibitem[{Higham(2002)}]{higham2002computing}
\bibinfo{author}{N.~J. Higham},
\newblock \bibinfo{title}{Computing the nearest correlation matrix--a problem
  from finance},
\newblock \bibinfo{journal}{IMA journal of Numerical Analysis}
  \bibinfo{volume}{22} (\bibinfo{year}{2002}) \bibinfo{pages}{329--343}.
\bibitem[{Orbandexivry et~al.(2013)Orbandexivry, Nesterov, and
  Van~Dooren}]{orbandexivry2013nearest}
\bibinfo{author}{F.-X. Orbandexivry}, \bibinfo{author}{Y.~Nesterov},
  \bibinfo{author}{P.~Van~Dooren},
\newblock \bibinfo{title}{Nearest stable system using successive convex
  approximations},
\newblock \bibinfo{journal}{Automatica} \bibinfo{volume}{49}
  (\bibinfo{year}{2013}) \bibinfo{pages}{1195--1203}.
\bibitem[{Qi and Sun(2006)}]{qi2006quadratically}
\bibinfo{author}{H.~Qi}, \bibinfo{author}{D.~Sun},
\newblock \bibinfo{title}{{A quadratically convergent Newton method for
  computing the nearest correlation matrix}},
\newblock \bibinfo{journal}{SIAM journal on matrix analysis and applications}
  \bibinfo{volume}{28} (\bibinfo{year}{2006}) \bibinfo{pages}{360--385}.
\bibitem[{Grant and Boyd(2013)}]{grant2013cvx}
\bibinfo{author}{M.~Grant}, \bibinfo{author}{S.~Boyd}, \bibinfo{title}{{CVX:
  Matlab software for disciplined convex programming, version 2.0 beta}},
  \bibinfo{howpublished}{http://cvxr.com/cvx}, \bibinfo{year}{2013}.
\bibitem[{Lin et~al.(2011)Lin, Liu, and Su}]{lin2011linearized}
\bibinfo{author}{Z.~Lin}, \bibinfo{author}{R.~Liu}, \bibinfo{author}{Z.~Su},
\newblock \bibinfo{title}{Linearized alternating direction method with adaptive
  penalty for low-rank representation},
\newblock in: \bibinfo{booktitle}{NIPS 2011}, pp. \bibinfo{pages}{612--620}.
\bibitem[{Boyd and Vandenberghe(2004)}]{boyd2004convex}
\bibinfo{author}{S.~Boyd}, \bibinfo{author}{L.~Vandenberghe},
  \bibinfo{title}{Convex optimization}, \bibinfo{publisher}{Cambridge
  University Press}, \bibinfo{year}{2004}.
\bibitem[{Watts and Strogatz(1998)}]{watts1998collective}
\bibinfo{author}{D.~J. Watts}, \bibinfo{author}{S.~H. Strogatz},
\newblock \bibinfo{title}{{Collective dynamics of `small-world' networks}},
\newblock \bibinfo{journal}{nature} \bibinfo{volume}{393}
  (\bibinfo{year}{1998}) \bibinfo{pages}{440--442}.
\bibitem[{Kocarev(2013)}]{kocarev2013consensus}
\bibinfo{author}{L.~Kocarev}, \bibinfo{title}{Consensus and synchronization in
  complex networks}, \bibinfo{publisher}{Springer}, \bibinfo{year}{2013}.

\end{thebibliography}



\end{document}